\newcommand{\norm}[1]{\|#1\|}
\newcommand{\what}{\widehat{w}}
\newcommand{\vhat}{\widehat{v}}
\newcommand{\RR}{\ensuremath{\mathbb R}}
\newcommand{\NN}{\ensuremath{\mathbb N}}
\newcommand{\prox}{\ensuremath{\operatorname{prox}}}
\newcommand{\argmin}{\ensuremath{\operatorname{argmin}}}
\newtheorem{theorem}{Theorem}[section]
\newtheorem{lemma}[theorem]{Lemma}
\newtheorem{proposition}[theorem]{Proposition}
\theoremstyle{plain}{\theorembodyfont{\rmfamily}
}
\theoremstyle{plain}{\theorembodyfont{\rmfamily}
}
\theoremstyle{plain}{\theorembodyfont{\rmfamily}
}
\theoremstyle{plain}{\theorembodyfont{\rmfamily}
}
\theoremstyle{plain}{\theorembodyfont{\rmfamily}
}
\theoremstyle{plain}{\theorembodyfont{\rmfamily}
\newtheorem{remark}[theorem]{Remark}}
\theoremstyle{plain}{\theorembodyfont{\rmfamily}
}
\definecolor{labelkey}{rgb}{0,0.08,0.45}
\definecolor{refkey}{rgb}{0,0.6,0.0}
\definecolor{Brown}{rgb}{0.45,0.0,0.05}
\definecolor{dgreen}{rgb}{0.00,0.49,0.00}
\definecolor{dblue}{rgb}{0,0.08,0.75}
\numberwithin{equation}{section}
\begin{document}
\title{\sffamily\huge 
Don't relax: early stopping for  convex regularization
}
\author{Simon Matet $^1$, Lorenzo Rosasco$^{2,3}$, Silvia Villa$^4$ and  B$\grave{\text{\u{a}}}$ng C\^ong V\~u$^5$\\ 
\small$^1$ Ecole Politechnique, Route de Saclay\\
\small 91128 Palaiseau, Cedex, France\\
\small$^2$ LCSL, Istituto Italiano di Tecnologia and Massachusetts Institute of Technology\\
\small Bldg. 46-5155, 77 Massachusetts Avenue, Cambridge, MA 02139, USA\\
\small $^3$ DIBRIS, Universit\`a di Genova, Via Dodecaneso 35\\ 
\small 16146 Genova, Italy\\
\small $^4$ Dipartimento di Matematica, Politecnico di Milano, Via Bonardi 9\\
\small 20133 Milano, Italy\\ 
\small $^5$ Laboratory for Information and Inference Systems, EPFL, ELD 243 (Batiment EL) \\
Station 11, CH-1015, Lausanne, Switzerland\\
\url{simonmatet@hotmail.com};\; \url{lrosasco@mit.edu};\\ \url{silvia.villa@polimi.it};\;\url{cong.bang@iit.it}
}
\maketitle
\begin{abstract}
We consider the problem of designing efficient regularization algorithms
when regularization is encoded by a (strongly) convex functional.
Unlike classical penalization methods based on a relaxation approach, we 
propose an iterative method where regularization is achieved via early stopping.
Our results show that the proposed procedure achieves the same recovery accuracy as
penalization methods, while naturally integrating computational considerations. 
An empirical analysis on a number of problems provides promising results with respect to the state of the art.
\end{abstract}

{\bf Keywords:} 
monotone inclusion,
maximal monotone operator,
operator splitting,
cocoercive operator,
composite operator,
duality,
stochastic errors,
primal-dual algorithm
 
 {\bf Mathematics Subject Classifications (2010)}: 47H05, 49M29, 49M27, 90C25 

\section{Introduction}

Many machine learning problems require  to estimate  a quantity of interest based on  random noisy data/measurements.  Towards this end,   a common  approach is considering estimators defined by the minimization of an empirical objective, where a data fit  term is penalized using {\em a regularizer}, encoding prior information on the  quantity  to be estimated. From a modeling perspective, this latter approach can be seen   as the relaxation of an ideal problem with equality constraints defined by exact data, whereas from a computational perspective it reduces in principle to the solution of a single  optimization problem. 

In practice however, the regularization parameter needs to be chosen, and hence the solution of multiple optimization problems is typically required. Moreover, computational and estimation aspects are usually considered separately, leading to  potential dichotomy and trade-offs between estimation and computational aspects \cite{BouBou11}. 
Indeed, these observations have recently motivated the development of techniques  to compute solutions corresponding to different penalization levels (regularization path) \cite{EfrHasJoh04,HasRosTib04} as well as an interest on the interplay between estimation and computation \cite{OymSolRec15}. 
 
In this paper, we investigate and apply   iterative regularization techniques in the context of linear inverse problems modeling many machine learning problems.  The key idea behind iterative regularization is that early stopping the iterative  optimization of 
 an empirical  problem,  performs a form of implicit regularization \cite{EngHanNeu96}.
Iterative regularization algorithms  are classical in inverse problems \cite{EngHanNeu96} and  have been recently analyzed  and   applied  in machine learning to finf the minimal norm solution \cite{BauPerRos07,BlaKra10,RasWaiYu14,RosVil15,YaoRosCap08,ZhaYu05}.
These works show that iterative regularization methods typically share the same estimation 
properties of penalized methods, but are often advantageous from a computational perspective. 
Indeed, since  the number of iterations becomes the regularization parameter, 
iterative regularization schemes have a built-in warm restart property that allows to
easily compute a whole regularization path, if the involved regularizer is the squared norm one.

The main question we discuss  in this work is how to derive and analyze fast    
 iterative regularization schemes  for large classes of  regularizers. Indeed,  flexibility in the choice 
 of this latter  functional  is  key for good estimation and has been the subject of much recent work. 
However, while how to exploit such penalties is clear using relaxation approaches, how to derive corresponding iterative regularization schemes is less obvious.

In this paper,  we  derive  iterative regularization for a strongly convex regularizer, by considering the iterative minimization of this latter  functional under  
equality  constraints defined by the {\em noisy data} (rather than a relaxation). The iteration thus obtained {\em does not} converge to the desired solution, but can be shown 
to be robust to noise if suitably stopped. Indeed, a stability argument shows the noisy iteration deviates gradually from a noiseless iteration  which in turns can be shown to 
converge  to the ideal solution. Exploiting this latter result, an optimal stopping rule  and the corresponding recovery results can be  derived.

 We explore 
this general idea considering two distinct iterations. The first is based on a dual gradient descent (a.k.a. mirror descent \cite{BecTeb03}, 
and linearized Bregman iteration \cite{BurOsh13}), while the second corresponds  to an accelerated variant  \cite{BecTeb14}. While both methods are 
shown to lead to the same recovery guarantees, acceleration  allows for more aggressive stopping rules with substantial computational gains. 

 The idea of considering iterative regularization and early stopping for convex regularizers is not new, we refer to \cite{BurSawSte14} for an 
 interesting survey on known results, open problems, and additional references. Some previous approaches \cite{OshBurGol05} rely 
 on Morozov discrepancy principle \cite{EngHanNeu96}, other approaches are based on stability, see \cite{BurResHe07,BotHei12}. 
 However the existing studies do not analyze the algorithms presented in this paper.
More importantly, we are not aware of any previous results considering the regularization effect of accelerated iterations \cite{Nes07}. 

Our theoretical findings are complemented by empirical results on three different applications:
variable selection, matrix completion, and image deblurring. The experiments  confirm
the theoretical results and show that the recovery properties of iterative regularization 
are comparable to penalization approaches with  much lower computational costs.

The rest of the paper is organized as follows: in Section~\ref{sec:ps} 
we describe the setting and the main assumptions, in Section~\ref{sec:S2} we introduce
the iterations we study, and in Section~\ref{sec:main}  we state the main results, 
discuss them, and provide the main elements of the proof. 
In Section~\ref{sec:exp} we present several experimental results on matrix completion, variable selection, and 
deblurring problems.

\section{Problem setting}
\label{sec:ps}
We consider a general problem of the form 
\begin{equation}
y=Xw^\dag, 
\end{equation}
for a given matrix\footnote{
For simplicity, the results are stated in finite dimensional euclidean spaces, but all the  conclusions hold if $\mathbb{R}^p$ and $\mathbb{R}^n$ are replaced  by
 Hilbert spaces $\mathcal{H}$ and $\mathcal{G}$.}
 $X\colon\RR^p\to\RR^n$, an observation $y\in\RR^n$, and a vector $w^\dagger\in\RR^p$. 
Such formulation include for instance  regression, feature selection,  as well as many  image/signal processing   problems.  In general, the solution of the
above linear equation is not unique, and a selection  principle is needed to choose an appropriate  solution (e.g.  in the high dimensional scenario, where $p>n$).
In this paper we assume that the solution of interest $w^\dag$ minimizes a  function  $R\colon\RR^p\to \left]-\infty,+\infty \right]$
 encoding some prior information  on the  problem at hand. 
We assume $R$ to be proper,
lower semicontinuous, strongly convex,
and we let  $w^\dag$ to be the unique solution of the optimization problem
\begin{equation}
\label{e:main}
\underset{ y=Xw}{\text{minimize}}\; R(w).
\end{equation}
In practice, one does not have access to $y$, but only to
a noisy version $\widehat{y}$. In particular in this paper we consider a
worst case scenario, where the noise is deterministic, i.e.   $\|y-\widehat{y}\|\leq \delta$, for some  $\delta>0$.
The goal is then to find a stable estimation of  $w^\dag$ only observing $X$ and $\widehat{y}$.

The classical way to achieve this goal is to relax the equality constraints, and use a Tikhonov regularization scheme:
\[
\min_{w\in\mathbb{R}^p}\|\widehat{y}-Xw\|^2 + \lambda R(w).
\]
A data fidelity term is added to the function $R$, multiplied by a regularization parameter $\lambda$.
Such an approach usually requires two steps: first, the solution of a regularized problem for several 
values of the regularizing parameter, and second the model selection, where the best regularized solution
is selected among the computed ones. 
 
In this paper we avoid  relaxation, and consider iterative regularization schemes.  
We define a sequence $(\widehat w_t)_{t\in\NN}$  derived by applying an appropriate minimization algorithm 
to the noisy problem 
\begin{equation}
\label{e:noisy}
\underset{\widehat{y}=Xw}{\text{minimize}}\; R(w).
\end{equation}
Such a procedure converges to a minimizer of the noisy problem~\eqref{e:noisy}, which is not 
the solution we are looking for, however a good solution can be achieved by  early stopping. More precisely,
we show that, depending on the noise level, we can select an element $\widehat w_{t_\delta}$ 
of the sequence $(\widehat w_t)_{t\in\NN}$  which  converges  to $w^\dag$  when the noise goes to zero.
An intuition of why this procedure works can be derived from the proof's strategy. To analyze the behavior of
the sequence $(\widehat w_t)_{t\in\NN}$ we define an auxiliary (regularizing) sequence $(w_t)_{t\in\NN}$,  
that is the sequence obtained applying the same minimization algorithm devised for problem \eqref{e:noisy},
to the ideal problem~\eqref{e:main}, which therefore converges to $w^{\dag}$. The choice of the
stopping time will be derived by the following error decomposition
 $$
 \norm{\widehat w _t -w^\dagger}\le  \norm{\widehat w _t -w_t}+ \norm{w _t -w^\dagger}.
 $$
The term $\norm{w _t -w^\dagger}$ is an optimization (or regularization) error.
 We will show that it vanishes for increasing $t$ and in fact will prove non asymptotic bounds. 
 The term $\norm{\widehat w _t -w _t}$ measures stability to noise and we will see to  increase with  $t$
 and $\delta$.
Given data and knowledge of the  noise level, our actual regularization procedure is specified by a suitable choice
 $ t_\delta$ and this results in the explicit  bound 
$\norm{\widehat w _{t _\delta}-w ^\dagger}\leq c \delta^{1/2}$. Note that the dependence on the noise level 
$\delta$ is the same as in Tikhonov regularization \cite{EngHanNeu96}.
In the rest of the paper, we develop the above idea providing all the details.

\noindent{\bf Notation} In the following, the operator norm of the
matrix $X$ is denoted by $\|X\|$.
 
\section{Iterative regularization algorithm for a general penalty}
\label{sec:S2}
In this section we begin presenting the iterative regularization procedures we study based on dual gradient descent (DGD)
and accelerated dual gradient descent (ADGD). The first one is
a basic algorithm, while the second is its accelerated version, requiring some additional
steps.  First, recall that the regularizing function $R$ in~\eqref{e:noisy} is assumed to be strongly convex. 
This implies that there exists $\alpha\in\left]0,+\infty\right[$ 
and a proper, lower semicontinuous, and convex function $F\colon\RR^p\to[0,+\infty]$
such that
\begin{equation}
\label{e:R}
R=F+\frac{\alpha}{2}\|\cdot\|^2.
\end{equation}

Both DGD  and ADGD belong to the class of first order methods, requiring only matrix and vector multiplications, and
the computation of the proximity operator of $\alpha^{-1}F$, which is defined as
\begin{align}
\label{e:prox}
(\forall w\in\RR^{p})\qquad\prox_{\alpha^{-1} F}(w)= \argmin_{u\in\RR^p}\Big\{ F(u)+\frac{\alpha}{2} \|u-w \|^2  \Big \}. 
\end{align}
The computation of the proximity operator involves a minimization problem, which  can be solved explicitly in many
relevant cases \cite{ComPes11}. In particular, it reduces to the well-known soft-thresholding operator when $F$ is equal 
to the $\ell^1$ norm, and to a projection, when $F$ is the indicator function of a convex and closed set. 
We will show in the supplementary material that DGD reduces to a gradient descent on the dual of problem in~\eqref{e:noisy}. 
Its asymptotic minimization properties for the problem in~\eqref{e:noisy}, which is not the one we want to solve,
have been studied in~\cite{Svva1}. Note that this algorithm, up to a change of variables, is called linearized Bregman 
iteration in the series of papers \cite{OshBurGol05,BacBur09,ZhaBurBre10,BurOsh13,BurSawSte14}. The same algorithm
is also called mirror descent in the optimization community \cite{BecTeb03}.
By considering a Nesterov acceleration \cite{Nes07} of gradient descent, we derive ADGD, that is the FISTA
variant on the dual problem, which has been considered in~\cite{BecTeb14,VilSal13}.
Algorithms DGD and ADGD can be seen as minimization algorithms applied to the dual of the original noise free 
problem in~\eqref{e:main}, in the presence of a nonvanishing error on the gradient.

\begin{center}
\begin{tabular}{cc}
\begin{tabular}{ l } 
\hline
{\bf Dual Gradient Descent (DGD)}\\
\hline
\\[-0.8ex]
 Let $\vhat_0=0\in\RR^p$  and  $\gamma=\alpha\|X\|^{-2}$\\[0.6ex]
 For $t=0,1,\ldots$ iterate\\[0.6ex]
\hspace{0.5cm}$\what_{t} = \prox_{\alpha^{-1}F}\big(-\alpha^{-1} X^T\vhat _t\big)$\\
\hspace{0.5cm}$\vhat_{t+1} = \vhat_t + \gamma(X \what_{t} -\widehat{y})$\\
\hspace{0.5cm}$\widehat{u}_t=\dfrac{1}{t+1}\sum_{k=0}^t\what_k$\\
\\
\\[-0.8ex]
\hline
\end{tabular}
&
\begin{tabular}{ l } 
\hline
{\bf Accelerated Dual Gradient Descent (ADGD)}\\
\hline
\\[-0.8ex]
 Let $\vhat_0=\widehat{z}_{-1}=\widehat{z}_0\in\RR^p$, $\gamma=\alpha\|X\|^{-2}$, and $\theta_0=1$\\[0.6ex]
 For $t=0,1,\ldots$ iterate\\[0.6ex]
\hspace{0.5cm}$\what_{t} = \prox_{\alpha^{-1}F}\big(-\alpha^{-1} X^T\widehat{z}_t\big)$\\
\hspace{0.5cm}$\widehat{r}_{t} = \prox_{\alpha^{-1}F}\big(-\alpha^{-1} X^T\vhat _t\big)$\\
\hspace{0.5cm}$\widehat{z}_{t} = \vhat_t + \gamma( X \widehat{r}_{t} -\widehat{y}) $\\
\hspace{0.5cm}$\theta_{t+1}=(1+\sqrt{1+4\theta_t^2})/{2}$\\
\hspace{0.5cm}$\vhat_{t+1}=\widehat{z}_t+\frac{\theta_t-1}{\theta_{t+1}}(\widehat{z}_t-\widehat{z}_{t-1})$\\
\hline
\end{tabular}
\end{tabular}
\end{center}

%
%
%

Before studying the regularizing properties of the proposed procedures, we show that DGD
is a generalization of the well-known Landweber iteration (see \cite{EngHanNeu96}). 

\begin{remark}[Connections with Landweber iteration]
Consider Algorithm DGD in the special case $F=0$. Noting that, for every $w\in\RR^p$, $\prox_{\alpha^{-1}F}(w)=w$, 
we derive
\begin{equation}
\what_{t+1}=\what_{t}-\gamma\alpha^{-1}X^T(X\what_{t}-\widehat{y}),
\end{equation}
which coincides with the Landweber iteration for solving Problem~\ref{e:main},
studied in the context of regression in \cite{YaoRosCap08}. 
ADGD provides a FISTA variant of Landweber iteration, for which we prove here regularization properties. 
\end{remark}

The previous remark shows that the proposed algorithms are generalization of the Landweber iteration for a more general penalty term
of the form in \eqref{e:R}. 
While it is well known that early stopping of the Landweber iteration leads to stable approximations of the minimal norm solution
of an inverse problem, here we generalize such result to obtain stable approximations of the solution defined by 
general regularizers. The presence of the additional term $F$ in the regularization function introduces in the algorithm 
a (nonlinear) proximal operation. 

\section{Early stopping for strongly convex iterative regularization}
\label{sec:main}

In this section, we present and discuss the main results of the paper.
We start with DGD.

\begin{theorem}[Dual gradient descent]
\label{thm:dgd}
Let $\delta\in\left]0,1\right]$.
Let   $(\widehat{u}_t)_{t\in\NN}$  be  the averaged
sequence generated by DGD.
Assume that there exists $\bar{v}\in\RR^p$ such that
$-X^T\overline{v}\in\partial R(w^\dag)$.
Set $a=2\|X\|^{-1}$ and $b=\|X\| \|v^\dag\|\alpha^{-1}$, where
$v^\dag$ is a solution of the dual problem of~\eqref{e:main}.
Then, for every $t\in\NN$, 
\begin{equation}
\label{e:aqavg}
\|\widehat{u}_{t} -w^\dag\|  \leq a  t^{1/2}\delta + b{t}^{-1/2}.
\end{equation}
In particular, choosing $t_\delta=\lceil c\delta^{-1}\rceil$ for some $c>0$, we derive
\begin{equation}
\label{eq:bdgd}
\|\widehat{u}_{t_\delta} -w^\dag\|  \leq  \big[a(c^{1/2}+1)+bc^{-1/2}\big] \delta^{1/2}.
\end{equation}
\end{theorem}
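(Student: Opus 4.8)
The plan is to run the same recursion on the exact problem \eqref{e:main} — call the resulting \emph{noiseless} iterates $(w_t,v_t,u_t)$, with $v_0=0$ — and to estimate separately the two terms of
\[
\norm{\widehat{u}_t-w^\dag}\le\norm{\widehat{u}_t-u_t}+\norm{u_t-w^\dag}.
\]
Throughout I use that $\what_t=\prox_{\alpha^{-1}F}(-\alpha^{-1}X^T\vhat_t)=\nabla R^*(-X^T\vhat_t)$ (and likewise for $w_t,v_t$), that $\nabla R^*$ is $\alpha^{-1}$-Lipschitz and $\alpha$-cocoercive (Baillon--Haddad, since $R$ is $\alpha$-strongly convex), and that, with the Lagrangian $\mathcal L(w,v)=R(w)+\scal{v}{Xw-y}$ of \eqref{e:main}, one has $w_k=\argmin_w\mathcal L(w,v_k)$. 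The qualification $-X^T\bar v\in\partial R(w^\dag)$ yields a dual optimizer $v^\dag$ with $-X^Tv^\dag\in\partial R(w^\dag)$ and $\nabla R^*(-X^Tv^\dag)=w^\dag$, and of course $Xw^\dag=y$.

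\textbf{Regularization error.} Since $\mathcal L(\cdot,v_k)$ is $\alpha$-strongly convex with minimizer $w_k$, testing its defining inequality at $w^\dag$ and adding $\mathcal L(w_k,v)-\mathcal L(w_k,v_k)=\scal{v-v_k}{Xw_k-y}$ gives, for all $v$, $\mathcal L(w_k,v)-\mathcal L(w^\dag,v_k)\le\scal{v-v_k}{Xw_k-y}-\tfrac{\alpha}{2}\norm{w_k-w^\dag}^2$. Writing $\scal{v-v_k}{Xw_k-y}=\gamma^{-1}\scal{v-v_k}{v_{k+1}-v_k}$ and expanding, the overshoot $\tfrac1{2\gamma}\norm{v_{k+1}-v_k}^2=\tfrac{\gamma}{2}\norm{X(w_k-w^\dag)}^2\le\tfrac{\gamma\norm{X}^2}{2}\norm{w_k-w^\dag}^2=\tfrac{\alpha}{2}\norm{w_k-w^\dag}^2$ is cancelled \emph{exactly} by the strong-convexity gain — this is where $\gamma=\alpha\norm{X}^{-2}$ and $Xw^\dag=y$ enter — leaving the telescoping estimate $\mathcal L(w_k,v)-R(w^\dag)\le\tfrac1{2\gamma}(\norm{v-v_k}^2-\norm{v-v_{k+1}}^2)$, using $\mathcal L(w^\dag,v_k)=R(w^\dag)$. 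Summing $k=0,\dots,t$, using $v_0=0$, averaging by convexity of $\mathcal L(\cdot,v)$, and taking $v=v^\dag$ gives $R(u_t)+\scal{v^\dag}{Xu_t-y}-R(w^\dag)\le\norm{v^\dag}^2/(2\gamma(t+1))$. Adding the strong-convexity inequality for $R$ at $w^\dag$ with subgradient $-X^Tv^\dag$, namely $R(u_t)\ge R(w^\dag)-\scal{v^\dag}{Xu_t-y}+\tfrac{\alpha}{2}\norm{u_t-w^\dag}^2$, makes the cross term cancel and yields $\tfrac{\alpha}{2}\norm{u_t-w^\dag}^2\le\norm{v^\dag}^2/(2\gamma(t+1))$, i.e.\ $\norm{u_t-w^\dag}\le b\,(t+1)^{-1/2}\le b\,t^{-1/2}$.

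\textbf{Stability error.} Put $e_k=\vhat_k-v_k$, $d_k=\what_k-w_k$; then $e_0=0$, $e_{k+1}=e_k+\gamma Xd_k-\gamma(\widehat y-y)$, and cocoercivity gives $\scal{e_k}{Xd_k}\le-\alpha\norm{d_k}^2$ and $\norm{d_k}\le\alpha^{-1}\norm{X}\norm{e_k}$. Expanding $\norm{e_k+\gamma Xd_k}^2$ and using $\gamma^2\norm{X}^2-2\gamma\alpha=-\gamma\alpha$ gives $\norm{e_k+\gamma Xd_k}\le\norm{e_k}$, hence $\norm{e_{k+1}}\le\norm{e_k}+\gamma\delta$ and the crude bounds $\norm{e_k}\le k\gamma\delta$, $\norm{d_k}\le k\delta\norm{X}^{-1}$. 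Those alone only give $\norm{\widehat u_t-u_t}\le\tfrac1{t+1}\sum_k\norm{d_k}=O(t\delta)$, too weak for the claim. The gain comes from \emph{retaining} the cocoercivity surplus: expanding $\norm{e_{k+1}}^2$ without discarding the $\gamma\alpha\norm{d_k}^2$ term gives
\[
\tfrac{\alpha}{2}\norm{d_k}^2+\tfrac1{2\gamma}\big(\norm{e_{k+1}}^2-\norm{e_k}^2\big)\le\gamma\norm{X}\delta\norm{d_k}+\tfrac{\gamma}{2}\delta^2-\scal{e_k}{\widehat y-y},
\]
which telescopes; inserting the crude bounds on $\norm{d_k}$ and $\norm{e_k}$ into the right-hand side yields $\sum_{k=0}^t\norm{d_k}^2\le 2\norm{X}^{-2}(t+1)^2\delta^2$. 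Cauchy--Schwarz then gives $\norm{\widehat u_t-u_t}\le\tfrac1{t+1}\sum_{k=0}^t\norm{d_k}\le(t+1)^{-1/2}\big(\sum_{k=0}^t\norm{d_k}^2\big)^{1/2}\le 2\norm{X}^{-1}t^{1/2}\delta=a\,t^{1/2}\delta$ ($t=0$ is trivial since then $\widehat u_0=u_0$). This $O(t\delta)\to O(t^{1/2}\delta)$ sharpening — a telescoping energy estimate followed by Cauchy--Schwarz — is the step I expect to be the main obstacle.

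Summing the two estimates gives \eqref{e:aqavg}. For the last assertion, $t_\delta=\lceil c\delta^{-1}\rceil$ satisfies $c\delta^{-1}\le t_\delta\le c\delta^{-1}+1$, so $t_\delta^{-1/2}\le c^{-1/2}\delta^{1/2}$ and, since $\delta\le1$, $t_\delta^{1/2}\delta\le(c^{1/2}\delta^{-1/2}+1)\delta\le(c^{1/2}+1)\delta^{1/2}$; substituting into \eqref{e:aqavg} gives \eqref{eq:bdgd}.
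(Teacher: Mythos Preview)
Your proof is correct and follows essentially the paper's approach: the same stability/regularization decomposition, the same cocoercivity (equivalently, firm nonexpansiveness of the prox) telescoping estimate $\sum_{k\le t}\norm{\what_k-w_k}^2=O(t^2\delta^2)$ followed by Jensen/Cauchy--Schwarz on the average, and the same final substitution for $t_\delta$. The one difference is in the regularization term: the paper quotes the Beck--Teboulle $O(1/t)$ rate for gradient descent on the dual objective and then uses strong convexity to bound $\norm{w_t-w^\dag}$ (passing to $u_t$ by convexity of the norm), whereas your self-contained Lagrangian telescoping argument reaches the averaged bound $\norm{u_t-w^\dag}\le b\,t^{-1/2}$ directly --- a stylistic variant rather than a different route.
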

Before discussing the above result, we state an analogous result for the accelerated variant. 
\begin{theorem}[Accelerated dual gradient descent] 
\label{thm:adgm}
Let $\delta\in\left]0,1\right]$ and let  $(\widehat{w}_t)_{t\in\NN}$  be  the
sequence generated by ADGD. 
Assume that there exists $\bar{v}\in\mathbb{R}^p$ such that
$-X^T\overline{v}\in\partial R(w^\dag)$.
Set $a=4\|X\|^{-1}$ and $b=2\|X\| \|v^\dag\|/\alpha$, where $v^\dag$ is a solution of the dual 
problem of~\eqref{e:main}.
Then, for every $t\geq 2$,
\begin{equation}
\label{e:aq}
\| \what_t -w^\dag\|  \leq a t\delta + b{t}^{-1}.
\end{equation}
In particular, choosing $t_\delta=\lceil c \delta^{-1/2}\rceil $ for some $c>0$,
\begin{equation}
\label{e:aq2a}
\| \what_t -w^\dag\|  \leq  \big[a(c+1)+bc^{-1}\big] \delta^{1/2}.
\end{equation}
\end{theorem}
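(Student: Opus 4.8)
\emph{Plan.} Everything will follow from viewing ADGD as an accelerated gradient method run on the Fenchel--Rockafellar dual of \eqref{e:main}, in which replacing the exact datum $y$ by the noisy $\widehat y$ amounts to running the same scheme with a \emph{constant, non-vanishing} error on the gradient. Writing $R = F + \tfrac{\alpha}{2}\|\cdot\|^2$ as in \eqref{e:R}, strong convexity of $R$ makes $R^*$ differentiable with $\alpha^{-1}$-Lipschitz gradient, and $\nabla R^*(-X^Tv) = \prox_{\alpha^{-1}F}(-\alpha^{-1}X^Tv)$; hence the dual objective $D(v) = R^*(-X^Tv) + \scal{y}{v}$ is convex with $L = \alpha^{-1}\|X\|^2$-Lipschitz gradient $\nabla D(v) = y - X\nabla R^*(-X^Tv)$, and a direct computation identifies ADGD with the accelerated gradient method on $D$ with step $\gamma = 1/L$ and $v_0 = 0$, the primal output being $\what_t = \nabla R^*(-X^T\widehat z_t)$. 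The hypothesis $-X^T\bar v\in\partial R(w^\dag)$ says precisely that $\partial D$ admits a zero: from $-X^T\bar v\in\partial R(w^\dag)$ one gets $\nabla R^*(-X^T\bar v)=w^\dag$ (uniqueness, by strong convexity of $R$), whence $\nabla D(\bar v)=y-Xw^\dag=0$. I fix a dual solution $v^\dag\in\Argmin D$ (of minimal norm, so that $b$ is well defined) and note $w^\dag = \nabla R^*(-X^Tv^\dag)$.

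\emph{Step 1 (from dual values to the primal iterate).} Since $R^*$ is convex with $\alpha^{-1}$-Lipschitz gradient, the co-coercivity inequality for $R^*$ together with $\nabla D(v^\dag)=0$ gives, for every $v$,
\[
D(v)-D(v^\dag)\;=\;D_{R^*}\big(-X^Tv,\,-X^Tv^\dag\big)\;\ge\;\frac{\alpha}{2}\,\big\|\nabla R^*(-X^Tv)-\nabla R^*(-X^Tv^\dag)\big\|^2 ,
\]
where $D_{R^*}$ denotes the Bregman distance of $R^*$. Taking $v=\widehat z_t$ and using $\what_t = \nabla R^*(-X^T\widehat z_t)$ and $w^\dag=\nabla R^*(-X^Tv^\dag)$ yields $\|\what_t-w^\dag\|\le\sqrt{2\alpha^{-1}\big(D(\widehat z_t)-D(v^\dag)\big)}$, so the whole estimate reduces to bounding the dual suboptimality of the inexactly generated iterate $\widehat z_t$.

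\emph{Step 2 (the inexact accelerated rate).} Running ADGD on $\widehat y$ is the accelerated gradient method on $D$ with, at every iteration $k$, the gradient error $e_k\equiv \widehat y-y$, $\|e_k\|\le\delta$. Invoking the error analysis of accelerated gradient descent with inexact gradients (as in \cite{BecTeb14,VilSal13}), with $v_0=0$ and $\gamma=1/L$,
\[
D(\widehat z_t)-D(v^\dag)\;\le\;\frac{2}{\gamma(t+1)^2}\Big(\|v^\dag\|+2\gamma\sum_{k=1}^{t}k\,\|e_k\|\Big)^{2}\;\le\;\frac{2}{\gamma(t+1)^2}\big(\|v^\dag\|+\gamma\,\delta\,(t+1)^2\big)^{2}.
\]
Expanding the square ($(p+q)^2\le 2p^2+2q^2$), combining with Step 1 ($\sqrt{p+q}\le\sqrt p+\sqrt q$), and substituting $\gamma=\alpha\|X\|^{-2}$ gives $\|\what_t-w^\dag\|\le a\,t\,\delta + b\,t^{-1}$ with $a$ a numerical multiple of $\|X\|^{-1}$ and $b$ a numerical multiple of $\|X\|\,\|v^\dag\|/\alpha$; carrying the constants carefully through the inexact-FISTA bound gives exactly $a=4\|X\|^{-1}$ and $b=2\|X\|\,\|v^\dag\|/\alpha$, i.e.\ \eqref{e:aq} (the restriction $t\ge 2$ only keeps the start-up of $\theta_t$ and the $t^{-1}$ term harmless). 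Finally, with $t_\delta=\lceil c\,\delta^{-1/2}\rceil$ and $\delta\le1$ one has $t_\delta\delta\le(c+1)\delta^{1/2}$ and $t_\delta^{-1}\le c^{-1}\delta^{1/2}$, which is \eqref{e:aq2a}. Consistently with the heuristic in Section~\ref{sec:ps}, $b\,t^{-1}$ is the regularization error $\|w_t-w^\dag\|$ of the noise-free accelerated iteration and $a\,t\,\delta$ the stability term $\|\what_t-w_t\|$.

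\emph{Main obstacle.} The real content is Step 2: controlling how the \emph{persistent} gradient perturbation of size $\delta$ --- which, unlike for DGD, never decays --- is amplified by Nesterov's momentum. For DGD the error accumulates merely linearly (giving, after averaging, the $t^{1/2}\delta$ stability term of Theorem~\ref{thm:dgd}), whereas acceleration couples the iterates so that the step-$k$ error enters with weight $\sim k$, producing the $\sum_k k\|e_k\|\sim t^2\delta$ factor and hence the worse $t\delta$ stability; one must moreover check that nothing worse than this quadratic-in-$t$ growth of the value gap occurs, and this is exactly where the identity $\theta_{t+1}^2-\theta_{t+1}=\theta_t^2$ and the standard FISTA potential-function estimate are needed. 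The dual reformulation, the transfer of Step 1, and the final substitution are routine. The payoff is that acceleration affords a far more aggressive stopping time $t_\delta\sim\delta^{-1/2}$ (against $\delta^{-1}$ for DGD) for the same $\delta^{1/2}$ accuracy.
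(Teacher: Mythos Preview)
Your proposal is correct and follows essentially the same route as the paper: both view ADGD as inexact accelerated gradient descent on the noise-free dual $D$ with constant gradient perturbation $\widehat y-y$, invoke an inexact-FISTA suboptimality bound (the paper via \cite{AujDos15} together with Lemma~\ref{lem:theta}, you via \cite{VilSal13}), transfer the dual gap to the primal distance through $\tfrac{\alpha}{2}\|\what_t-w^\dag\|^2\le D(\cdot)-D(v^\dag)$, and then substitute $t_\delta=\lceil c\delta^{-1/2}\rceil$. The only cosmetic differences are that you evaluate $D$ at $\widehat z_t$ (which is in fact the point consistent with $\what_t=\nabla R^*(-X^T\widehat z_t)$, whereas the paper writes $\vhat_t$) and that your reference \cite{BecTeb14} contains no inexact analysis---\cite{VilSal13}, \cite{SchLerBac11}, or \cite{AujDos15} are the right citations for Step~2.
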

We first discuss the results and make a comparison with related work, and then give a sketch of the proof. 
The complete proof can be found in the supplementary material. 

\paragraph{Discussion and comparison with related work}
As anticipated in Section~\ref{sec:ps}, the bounds in \eqref{eq:bdgd} and \eqref{e:aq2a} are derived
by optimizing a stability plus regularization/optimization bound. Note  in particular that  the constants 
appearing in the  regularization error are determined by the strong convexity constant and the norm 
of the operator $X$. The above results show that, 
given a noise level $\delta$,  regularization is achieved computing a suitable number 
$t_\delta$ of iterations of DGD and ADGD. The number of required iterations 
tends to infinity  as the noise goes to zero. 
The definition of $t_\delta$ in Theorems~\ref{thm:dgd} and \ref{thm:adgm} is an 
early stopping rule.  The dependence of the noise that we get in Theorems~\ref{thm:dgd} and \ref{thm:adgm} is
optimal \cite{EngHanNeu96}, and coincides with the Tikhonov regularization one. 
The difference between DGD and ADGD is on the computational
aspect: indeed, to achieve the same recovery accuracy, a number of iterations of the order of $\delta^{-1}$ are needed
for the basic scheme, and only $\delta^{-1/2}$ iterations are needed for the accelerated method. 
This kind of result resembles the  behaviour of the $\nu$-method for the minimal norm solution \cite{EngHanNeu96}.

The condition $-X^T\overline{v}\in\partial R(v^\dag)$ can be interpreted as an abstract regularity 
condition on the subdifferential of $R$ \cite{BurSawSte14}. When $R=\|\cdot\|^2/2$, and more generally when $R$ is real-valued, it is 
 automatically satisfied under our assumptions,  and it corresponds to what is called a source condition \cite{EngHanNeu96,CapDev07}. 

\begin{remark}[Avoiding averaging]
For DGD, regularizing properties are proved for the averaged sequence. 
However, if sparsity properties of the solution are of interest, averaging is not appropriate.
For the nonaveraged sequence $\what_t$ defined by DGD, we have that
for every $\delta\in\left]0,1\right]$ there exists $t_\delta=O(\delta^{-1})$ 
such that $\| \what_{t_\delta} -w^\dag\|  \leq (a+2 b) \delta^{1/2}$,
with $a$ and $b$ defined as in Theorem~\ref{thm:dgd}. See Proposition~\ref{l:s1} and Theorem~\ref{thm:dupri} in the supplementary material for the proof. 
\end{remark}

\begin{remark}[Inexact prox]
In some interesting cases, the proximity operator is not available in closed form, but can be still  computed 
at reasonable cost (see \cite{SalVil12,VilSal13} for a throughout discussion). The results in Theorem~\ref{thm:dgd}
and \ref{thm:adgm} hold also if the proximity operator is computed inexactly, at an increasing precision.    
\end{remark}

\begin{remark}[Beyond worst case]
While we considered a general regularization $R$ and obtained worst-case results, 
an interesting question is if these results can be improved under additional assumptions on $R$,
e.g. assuming it is sparsity inducing. This will be the subject of future work, and
we refer to \cite{OshRuaXio14} for some results in this direction. 
\end{remark}

We  next compare our iterative regularization methods with  related work. 
The case $R=\|\cdot\|^2$ is classic, see~\cite{EngHanNeu96}.
In \cite{OshBurGol05} an iterative regularization procedure based on the so called Bregman iteration, 
is considered. An early stopping rule based on a discrepancy principle in the case of noisy data is also presented. 
There is one main difference with respect to our contribution. Each DGD or ADGD step does not require inner algorithms
if the proximity operator is available in closed form, while Bregman iteration requires the solution of a nontrivial
minimization problem at each step. Such step is computationally as costly as solving a Tikhonov regularized problem. 
A stability analysis for the Bregman iteration is presented in Theorem~4.2 in \cite{BurResHe07}, while
weak convergence without the strong convexity assumption is proved in \cite{OshBurGol05}.
A qualitative early stopping  rule for the DGD  algorithm has been considered in \cite{BacBur09} for the total variation 
case. Finally, a related algorithm to the DGD  is the one considered in \cite{BotHei12}. The setting of \cite{BotHei12} is more general
than ours, but the obtained results  are weaker: the stopping rule is of the form $O(\delta^{-2})$ and no quantitative
bounds of $\|\widehat{w}_{t_\delta}-w^\dagger\|$ are given. 

\paragraph{Sketch of the proof}
\label{sec:S1}
We now discuss the main elements of the proof. The complete argument can be found in the supplementary material. 
We start from the proof of Theorem~\ref{thm:dgd} and then we will briefly comment on the proof of Theorem~\ref{thm:adgm}.
The proof of Theorem~\ref{thm:dgd} is based on a decomposition of the error to
be estimated in two terms. The idea is  to build an auxiliary sequence and 
to majorize the error with the sum of two quantities that can be interpreted as 
a stability and an optimization (regularization) error, respectively.  
Bounds on these two terms are then provided.  
We first introduce the corresponding algorithm to solve the target problem in \eqref{e:main}. 
This algorithm is not  used in  practice, but is needed only for the theoretical analysis, 
and is the noise free version of DGD, where $\widehat{y}$ is replaced 
by $y$.  Starting from $v_0=0$, the $t$-th iteration is defined by
\begin{align}
\label{e:main1}
w_{t} = \prox_{\alpha^{-1}F}\big(-\alpha^{-1} X^Tv_t\big),\quad v_{t+1} = v_t + \gamma(X w_{t} -y), \quad u_t=\sum_{k=0}^{t}w_t/(t+1)
\end{align}
for the  gradient descent algorithm applied to the dual 
of problem~\eqref{e:main} (see the supplementary material for its definition). 
The choice of  $ t_\delta$ is derived from the 
the following error decomposition
 $$
 \norm{\widehat u _t -w^\dagger}\le  \norm{\widehat u _t -u_t}+ \norm{u _t -w^\dagger}.
 $$
The term $\norm{u_t -w^+}$ is called approximation, but also optimization or regularization error.
It vanishes for increasing $t$ and in fact the following non asymptotic bound holds
$ \norm{u_t -w^\dagger}\leq {\|X\|\|v^\dagger\|}{\alpha^{-1}t^{-1/2}}$. 
The term $\norm{\widehat u_t -u_t}$ measures stability and
its behavior for fixed $t$ and noise level $\delta$ is 
$\norm{\widehat u_t -u_t}\le 2\|X\|^{-1} \delta t^{1/2}.$
The choice of $t_\delta$ is obtained optimizing the resulting  bound with respect to 
$t\in\NN$, that is $t_\delta= \argmin_{t\in\mathbb{N}}\left( \|X\|\|v^\dagger\|\alpha^{-1}t^{-1/2}+2\|X\|^{-1} \delta t^{1/2} \right)$.

The stopping rule for ADGD follows analogously from a general result about convergence of proximal methods in the presence of
computational errors \cite{AujDos15}.
%

 \section{Numerical experiments}
 \label{sec:exp}
  In this section we compare our iterative regularization techniques (DGD and ADGD with early stopping) with 
  Tikhonov regularization  on three different problems: variable selection, matrix completion, and image deblurring. 
The performance of the Tikhonov regularization scheme depends of course on the chosen algorithm to solve the regularized problems.
We use state of the art techinques: accelerated proximal gradient descent with warm-restart \cite{HalYinZha08}. 
The model selection phase is performed as follows:
we first solve the regularized problem with a very large value $\lambda_{0}$, and then for the sequence
$\lambda_i=2^{-i}\lambda_0 $. Since in practice the noise level is unknown, we choose $\lambda$ using  holdout cross-validation
keeping 1/10 of the available points for validation.
For initializing the accelerated gradient descent on the regularized problem we use the warm-restarting trick,  which is known
(in practice) to dramatically accelerate the computation of the regularization path \cite{BecBobCan11}. 
The comparison relies heavily  on the stopping rule used for stopping the iteration computing the minimizer of the Tikhonov 
regularized functional. We used a very loose stopping rule for the algorithm for a given $\lambda_i$ to make Tikhonov regularization
more competitive. More precisely the iterations were stopped when the distance between to successive iterations was less than $0.001\cdot\delta$. 
Since accelerated proximal gradient descent involves steps with the same computational complexity  to those of DGD and ADGD, the comparison 
between the three approaches is made in terms of number of iterations. The number of iterations for Tikhonov regulatization is the total number of iterations
for all different $\lambda$ values.  
 \subsection{Variable selection}
We consider a linear regression problem with $n=500$ examples and $p=2000$ variables. 
We assume that  $\widehat{y}\in\RR^{500}$ is obtained corrupting  with a Gaussian noise of
mean zero and variance $\delta/\sqrt{n}$ a measurement $Xw_*$, where $w_*$ is a vector having a small number
of nonzero components (10, 30, or 60, respectively). In this example, the covariates are correlated with a random 
covariance matrix $\Sigma$ with $\Sigma=C^TC$, where $C$ is a random matrix
 with entries drawn independently at random from a gaussian distribution with standard deviation $0.1$.
To perform variable selection, and obtain a sparse estimator 
we apply our iterative regularization methods, DGD and ADGD, to the 
elastic net regularizing function $R(w)=\|w\|_1+({\alpha}/{2})\|w\|^2$.
 We compared the number of iterations of DGD, ADGD, and Tikhonov regularization on 50 different realizations of sample points. 
 The parameters were chosen using a validation set of 100 samples. 
  The results are shown in Table~\ref{feat-table}. 
 For Tikhonov regularization, we used a second  least squares step on the selected variables to compute the validation score, 
 requiring an extra computation load that we did not quantify here. It is worth noticing that iterative regularization does not require this further step. 
The results suggest that Tikhonov regularization and iterative regularization algorithms have very similar prediction and variable selection 
performances. DGD is approximately as fast as state of the art variational regularization, while ADGD is much faster. 

\begin{table}[t]
 { \caption{\small Performances of DGD, ADGD, and warm-started Tikhonov regularization with accelerated proximal gradient descent. False positives are the selected irrelevant variables. False negatives are the discarded relevant features. Prediction error is the average prediction error of the estimated solution in percent. The results are averaged over 50 trials with the standard deviation between parentheses.}
\label{feat-table}
\begin{center}
{\footnotesize
\begin{tabular}{ cclcccc } 
\hline
\multirow{2}{2em}{Noise} & Relevant  & \multirow{2}{4em}{Algorithm}& False & False &Prediction &\multirow{2}{4em}{Iterations} \\
& Variables & & Positive & Negative& Error &\\
\hline
\multirow{9}{2em}{0.1} & \multirow{3}{2em}{10}& \cellcolor[gray]{0.9} DGD &\cellcolor[gray]{0.9}0.10 (0.3)&\cellcolor[gray]{0.9}0.53 (0.7) &\cellcolor[gray]{0.9}3.7 (0.6) &\cellcolor[gray]{0.9}890 (200)\\
&&{\cellcolor[gray]{0.8}} ADGD &\cellcolor[gray]{0.8}0.40 (0.9)&\cellcolor[gray]{0.8}0.53 (0.7) &\cellcolor[gray]{0.8}3.7 (0.6) &\cellcolor[gray]{0.8}140 (30)\\ 
&& Tikhonov & 0.62 (1)&0.28 (0.5) &3.6 (0.3) & 580 (40)\\ 
\hhline{|~------|}
&\multirow{3}{2em} {30} & {\cellcolor[gray]{0.9}} DGD &{\cellcolor[gray]{0.9}}8.8 (5) &{\cellcolor[gray]{0.9}}1.8 (1) &{\cellcolor[gray]{0.9}} 4.8 (0.4)&  {\cellcolor[gray]{0.9}}860 (90)\\
&&\cellcolor[gray]{0.8} ADGD &\cellcolor[gray]{0.8}5.0 (5)&\cellcolor[gray]{0.8}1.8 (1) &\cellcolor[gray]{0.8}4.6 (0.4) &\cellcolor[gray]{0.8} 110 (16)\\ 
&& Tikhonov & 12 (9)&  2.1 (1)&5.4 (0.6) & 860(140)\\ 
\hhline{|~------|}
&\multirow{3}{2em} {60}&  \cellcolor[gray]{0.9 } DGD & \cellcolor[gray]{0.9}49 (10)&\cellcolor[gray]{0.9}5.2 (2) &\cellcolor[gray]{0.9} 8.1 (0.8) &\cellcolor[gray]{0.9} 940 (100)\\
&&\cellcolor[gray]{0.8}  ADGD &\cellcolor[gray]{0.8}  27 (10)&\cellcolor[gray]{0.8} 5.7 (2) &\cellcolor[gray]{0.8}  7.4\phantom{0}(0.7) &\cellcolor[gray]{0.8}  170 (30)\\ 
&&Tikhonov & 53 (20)& 5.7 (2) & 7.4 (0.7) & 1800 (400)\\ 
\hline
\multirow{9}{2em}{1} & \multirow{3}{2em}{10}& \cellcolor[gray]{0.9} DGD &\cellcolor[gray]{0.9} 2.2 (3)&\cellcolor[gray]{0.9} 2.7 (1) &\cellcolor[gray]{0.9} 46 (2) &\cellcolor[gray]{0.9} 480 (100)\\
&&{\cellcolor[gray]{0.8}} ADGD &\cellcolor[gray]{0.8} 1.1 (2)&\cellcolor[gray]{0.8} 2.8 (1) &\cellcolor[gray]{0.8} 45 (2) &\cellcolor[gray]{0.8} 92 (40)\\ 
&& Tikhonov & 2.3 (2)& 2.9 (1) & 48 (4) & 360 (90)\\ 
\hhline{|~------|}
&\multirow{3}{2em} {30} & {\cellcolor[gray]{0.9}} DGD &{\cellcolor[gray]{0.9}} 17 (10) &{\cellcolor[gray]{0.9}} 14 (3) &{\cellcolor[gray]{0.9}} 65 (3) &  {\cellcolor[gray]{0.9}} 560 (50)\\
&&\cellcolor[gray]{0.8} ADGD &\cellcolor[gray]{0.8} 14 (10)&\cellcolor[gray]{0.8}  15 (3) &\cellcolor[gray]{0.8} 64\phantom{0}(3) &\cellcolor[gray]{0.8} 220 (3)\\ 
&& Tikhonov & 8.0 (7) &  15 (2) & 63 (4) & 990 (300)\\ 
\hhline{|~------|}
&\multirow{3}{2em} {60}&  \cellcolor[gray]{0.9 } DGD & \cellcolor[gray]{0.9} 40 (10) &\cellcolor[gray]{0.9} 33 (4) &\cellcolor[gray]{0.9} 77 (3) &\cellcolor[gray]{0.9} 560 (10)\\
&&\cellcolor[gray]{0.8}  ADGD &\cellcolor[gray]{0.8}  40 (20) &\cellcolor[gray]{0.8} 33 (6) &\cellcolor[gray]{0.8}  77 (3) &\cellcolor[gray]{0.8}  220 (3)\\ 
&&Tikhonov & 35 (30) & 36 (8) & 78 (2) & 1700 (500)\\ 
\hline
\end{tabular}}
\end{center}}
 \end{table}
 
 \subsection{Matrix completion}
We consider  the problem of recovering a low-rank data matrix $W\in\RR^{n\times p}$ from a sampling of its entries.
We denote by $\Omega$ the subset of indices corresponding to sampled entries.
We find an approximate solution of this problem  by minimizing a strongly convex relaxation \cite{CaiCanShe10} given 
by the sum of the nuclear norm with the squared Frobenius norm, that is:
\begin{equation}
\label{pb:mc}
\min_{\mathcal{X}W=\widehat{Y}} \|W\|_*+\frac{\alpha}{2}\|W\|_F^2,
\end{equation}
where $\widehat{Y}\in\RR^{n\times p}$, is such that, for every $(i,j)\not\in\Omega$, $\widehat{Y}_{i,j}=0$, and $\mathcal{X}\colon\RR^{n\times p}\to \RR^{n\times p}$ is such that $(\mathcal{X}W)_{i,j}=W_{i,j}$ if $(i,j)\in\Omega$ and $0$ otherwise. 
DGD applied to this problem is the Singular Value Tresholding (SVT) algorithm described in \cite{CaiCanShe10} and note that, interestingly, 
ADGD is its accelerated counterpart. The most expensive computational part is the proximal step, which requires
an SVD decomposition \cite{ComPes11}. While in \cite{CaiCanShe10} the authors apply the algorithm to noisy data, they then propose as
an improvement a different relaxation \cite{CanPla09}. Here we show that SVT with early stopping is indeed an efficient algorithm to deal with matrix completion of
noisy data.  We tested the performance on simulated data using a standard procedure described in \cite{CaiCanShe10}. We multiplied random gaussian 
matrices with independent entries and variance $1$ of size $n\times r$ and $r\times p$ where $r$ is the chosen rank,  and then we added an 
additive gaussian noise. We computed the Root Mean Square Error of the proposed approximation:
${\rm RMSE(\widehat{W})}={ (\sum_{(i,j)\in A} (\widehat{W}_{i,j}-Y_{i,j})^2)^{1/2}}/{|A|}$ ,
where $A$ is the test set. 
 \begin{table}[t]
 { \caption{\small Minimal achieved RMSE and associated cost of  ADGD and Tikhonov approach solved with warm-starting, accelerated proximal gradient method, on simulated data with additive gaussian noise of standard deviation $\delta$.  We used the ground truth to select the best parameter. The percentage of known entries (knowledge ratio) is 0.12, 0.39 and 0.57 for, respectively, ranks 10, 50 and 100. The matrices are of size 1000x1000.  The results were averaged over 5 simulations with the standard deviation between parentheses. For ADGD, the iterative trials were capped at 500 iterations (for noise levels of 0.01) and 250 (noise of 0.1 and 1).}
  \label{mc-table}
\begin{center}
{\footnotesize
\begin{tabular}{ cccccc } 
\hline
\multirow{2}{2em}{Noise} & \multirow{2}{2em}{Rank}  & RMSE & RMSE  &Iterations&Iterations \\
&   & ADGD  & Tikhonov  &ADGD & Tikhonov \\[0.3ex]
\hline
\multirow{3}{2em}{0.01}& \cellcolor[gray]{0.9} 10 &\cellcolor[gray]{0.9} $2.1\cdot10^{-3} (3.8\cdot10^{-5})$&\cellcolor[gray]{0.9} $7.6\cdot10^{-3} (1.5\cdot 10^{-4})$ &\cellcolor[gray]{0.9} 500 (0) &\cellcolor[gray]{0.9} 527 (3)\\
& \cellcolor[gray]{0.8} 50 &\cellcolor[gray]{0.8} $3.2\cdot10^{-3} (2.2\cdot10^{-5})$&\cellcolor[gray]{0.8} $9.1\cdot10^{-3} (4.1\cdot 10^{-5})$ &\cellcolor[gray]{0.8} 500 (0) &\cellcolor[gray]{0.8} 295 (1)\\
& 100 & $4.7\cdot10^{-3} (2.9\cdot10^{-5})$ & $1.1\cdot10^{-2} (6.4\cdot10^{-5})$ & 500 (0) & 273 (1)\\
\hline
\multirow{3}{2em}{0.1}& \cellcolor[gray]{0.9} 10 &\cellcolor[gray]{0.9} $0.23 (4.6\cdot10^{-3})$&\cellcolor[gray]{0.9} $0.75 (9.0\cdot 10^{-3})$ &\cellcolor[gray]{0.9} 250 (0) &\cellcolor[gray]{0.9} 539 (3.7)\\
& \cellcolor[gray]{0.8} 50 &\cellcolor[gray]{0.8} $0.35 (2.1\cdot10^{-3})$&\cellcolor[gray]{0.8} $0.95 (2.7\cdot 10^{-3})$ &\cellcolor[gray]{0.8} 250 (0) &\cellcolor[gray]{0.8} 425 (0.49)\\
& 100 & $0.48 (2.0\cdot10^{-3})$ & $1.1 (4.3\cdot10^{-3})$ &190 (0) & 470 (0.4)\\
\hline
\multirow{3}{2em}{1}& \cellcolor[gray]{0.9} 10 &\cellcolor[gray]{0.9} $27 (0.28)$&\cellcolor[gray]{0.9} $76 (1.1)$ &\cellcolor[gray]{0.9} 191 (0) &\cellcolor[gray]{0.9} 698 (3.6)\\
& \cellcolor[gray]{0.8} 50 &\cellcolor[gray]{0.8} $41 (0.20)$&\cellcolor[gray]{0.8} $108 (0.20)$ &\cellcolor[gray]{0.8} 205 (0.4) &\cellcolor[gray]{0.8} 729 (3.6)\\
& 100 & $$55 (0.18)$$ & $125 (0.11)$ & 210 (0.4) & 742 (2.8)\\
\hline
\end{tabular}}
\end{center}}
 \end{table}
 As can be seen in Table~\ref{mc-table} ADGD is comparable to state of the art Tikhonov regularization, with a significantly
lower computational cost. In addition, we compare DGD with Tikhonov regularization
 (with accelerated proximal gradient+warm restart) on the MovieLens 100k dataset\footnote{http://grouplens.org/datasets/movielens/}.  
 We averaged our results over five trials.  We left out one tenth of the known entries at each trial and chose the best step/parameter  via 2-fold 
 cross validation. The mean RMSE for DGD  and Tikhonov was 1.02. It required 250 iterations on average using DGD,  and 550 iterations using Tikhonov.
\begin{figure}
\begin{center}
\subfigure{\includegraphics[width=0.151\linewidth]{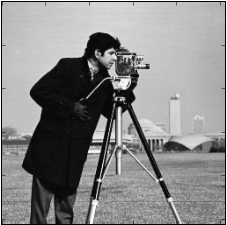}}
\subfigure{\includegraphics[width=0.15\linewidth]{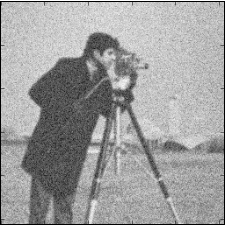}\label{fig:noisy}}
\subfigure{\includegraphics[width=0.151\linewidth]{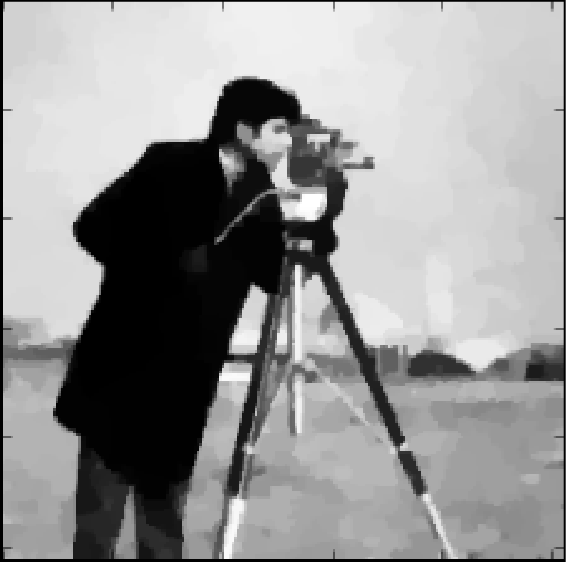}\label{fig:43}}
\subfigure{\includegraphics[width=0.151\linewidth]{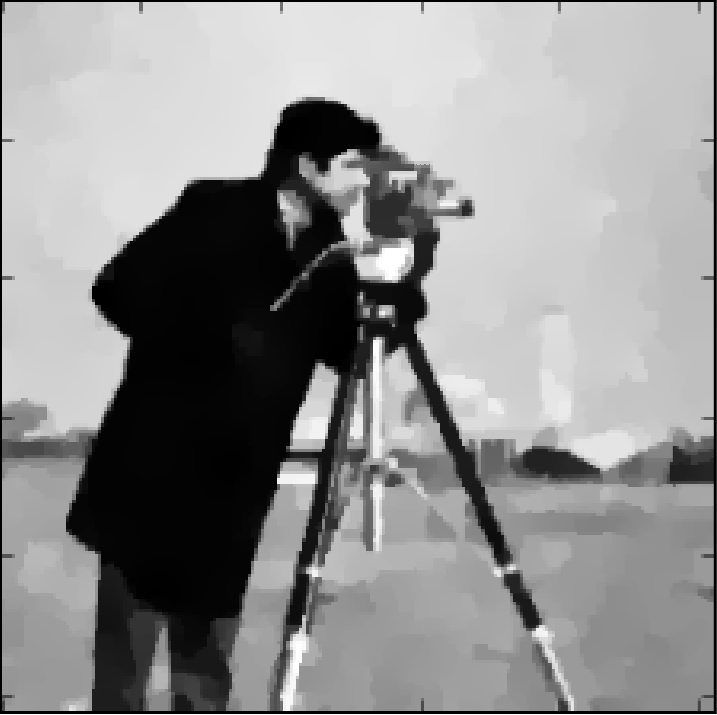}\label{fig:2}}

\caption{\small From left to right: orginal Cameraman image, noisy blurred image,  restored image with Tikhonov regularization, restored image with  ADGD.}
\label{fig:cman}
\end{center}
\end{figure}

 \subsection{Image  deblurring}
Finally, we apply ADGD to an image processing problem, namely 
 deblurring, with a strongly convex perturbation of total variation. 
More precisely, given an image $W\in \RR^{256\times 256}$, we consider the regularization function
$ R(W)=TV(W)_{1,2}+ \frac{3}{2} \|W\|^2$,
 where $TV$ is the discrete total variation.
 In this application the proximity operator of the total variation penalty is not available in closed
 form. In our experiments, this is computed at each iteration using  20 steps of accelerated dual 
 forward backward on the denoising problems corresponding to \eqref{e:prox}, and by warm starting with 
 the previous approximate proximal point.
We assume to have access to a noisy image $\widehat{y}$, obtained corrupting the original
image with a Gaussian blur of one pixel and an additive Gaussian noise with variance $0.01$.
 We compared the iterative regularization ADGD with early stopping with
 the solution obtained with the Tikhonov approach corresponding to the best regularization parameter
 on the cameramen image. The quality of an approximation of the original image is measured in terms of 
 PSNR, and the best results are reported in Figure~\ref{fig:cman}. 
On the computational side, for the Tikhonov approach we set $\lambda_0=10^5$, and
 then decreased it by multiplying it  by $0.8$ at each step. The best solution is obtained for $\lambda=6.8$, 
while  iterative regularization achieves the best results at the third iteration.

{\small {\bf Acknowledgments}  This material is based upon work supported by the Center for Brains, Minds and Machines (CBMM), funded by NSF STC award CCF-1231216.
L. R. acknowledges the financial support of the Italian Ministry of Education, University and Research FIRB project RBFR12M3AC.
S. V. acknowledges the support of the Gnampa-Indam project 2017:``Algoritmi di ottimizzazione ed equazioni di evoluzione ereditarie''.}

{\small
}

\newpage

\appendix
\section{Supplementary Material}
\subsection{Derivation of the algorithm}
We start showing that the proposed procedures DGD and ADGD are indeed a gradient and an accelerated gradient descent algorithm
applied to the dual problem of the noisy minimization problem
\begin{equation}
\label{e:noisya}
\min_{Xw=\widehat{y} }  R(w), \quad \text{with } R=F+\frac{\alpha}{2}\|\cdot\|^2.
\end{equation}
Let $C$ be a convex and closed subset of $\mathbb{R}^n$. With $\delta_C$ we denote the indicator function of $C$, which takes
value $0$ on $C$ and $+\infty$ otherwise. 
The optimization problem in \eqref{e:noisya} can be equivalently written as
\begin{equation}
\label{e:noisya2}
\min_{w\in\mathbb{R}^p} R(w)+ \delta_{\widehat{y}} (Xw).
\end{equation}
The above optimization problem is given by the sum of two convex, proper, and lower semicontinuous functions, where one of the
two is composed with a linear operator. This is the suitable form to apply Fenchel-Rockafellar duality. First recall that
the Fenchel conjugate of $G\colon\mathbb{R}^p\to[-\infty,+\infty]$ is  $G^*\colon\mathbb{R}^p\to[-\infty,+\infty]$, such that, for
every $u\in\mathbb{R}^p$, $G^*(v)=\sup_{w\in\mathbb{R}^p} \langle v,w\rangle- G(w)$.    The dual of the problem in~\eqref{e:noisya2}
is then (see e.g. \cite[Definition 15.19]{livre1})
\begin{equation}
\label{pb:dual}
\min_{v\in\mathbb{R}^p} R^*(-X^Tv)+\langle \widehat{y},v\rangle.
\end{equation}

Since $R=F+(\alpha/2)\|\cdot\|^2$, its conjugate is differentiable with Lipschitz continuous gradient and is given by (see \cite[Example 13.4]{livre1}), 
$$
R^*(v)=\frac{1}{2\alpha}\|v\|^2-\inf_{u\in\mathbb{R}^p} \Big\{F(u)+\frac{\alpha}{2}\big\|u-\frac{v}{\alpha}\big\|^2 \Big\}.
$$ 
The second term on the right hand side is called Moreau envelope of $F$, and we use the notation
$^{\alpha^{-1}\!\!}F(v)=\inf_{u\in\mathbb{R}^p} \Big\{F(u)+\frac{\alpha}{2}\big\|u-\frac{v}{\alpha}\big\|^2 \Big\}$.
A formula for the gradient of $^{\alpha^{-1}\!\!}F$ is known \cite[Proposition 12.29]{livre1}, and we derive 
\begin{align}
\nonumber \nabla R^*(v)& =\alpha^{-1}{v}- \alpha^{-1} \nabla (^{\alpha^{-1}\!\!}F)(\alpha^{-1} v)\\
\nonumber&=\alpha^{-1}{v}- \alpha^{-1} \Big(\alpha\big(\alpha^{-1} v -\prox_{\alpha^{-1} F}(\alpha^{-1}v) \big) \Big)\\
&=\prox_{\alpha^{-1} F}(\alpha^{-1}v)
\end{align}
This implies that one step of gradient descent applied to the problem in~\eqref{pb:dual} can be written as
\[
v_{t+1}=v_t+\gamma \big(X \prox_{\alpha^{-1} F}(-\alpha^{-1}X^T v_t)-\widehat{y}\big),
\]
and this is the main iteration in  DGD.  The derivation of ADGD is analogous, simply the gradient descent method is replaced by FISTA 
acceleration~\cite{BecTeb09}. 
\subsection{Proofs and auxiliary results}
In order to prove Theorems~\ref{thm:dgd} and \ref{thm:adgm}, we need some auxiliary results. 
We start with dual gradient descent.
\begin{theorem} 
\label{thm:dupri}
Let $(w_t)_{t\in\NN}$ be the sequence in $\RR^p$ generated by iteration~\eqref{e:main1} and define
$u_t=\sum_{k=0}^tw_k/(t+1)$.
Assume that there exists $\bar{v}\in\RR^p$ such that
\[
-X^T\overline{v}\in\partial R(w^\dag).
\]
Then
\begin{align}
\label{e:primal_rate}
\nonumber \|w_t-w^\dagger\| \leq \frac{\|X\|\|v^\dagger\|}{\alpha\sqrt{t}}\\
\|u_t-w^\dag\|\leq \frac{\|X\| \|v^\dag\|}{\alpha \sqrt{t}},
\end{align}
where $v^\dag$ is a solution of the dual problem. 
\end{theorem}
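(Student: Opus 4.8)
The plan is to recognize iteration~\eqref{e:main1} as a plain gradient descent on the Fenchel--Rockafellar dual of~\eqref{e:main}, and then convert a rate on the dual objective value into a rate on the primal iterates using the strong convexity of $R$. Set $\Phi(v)=R^*(-X^Tv)+\langle y,v\rangle$; as established in the derivation of the algorithm, $\nabla R^*(v)=\prox_{\alpha^{-1}F}(\alpha^{-1}v)$, so $w_t=\nabla R^*(-X^Tv_t)$ and the update becomes $v_{t+1}=v_t-\gamma\nabla\Phi(v_t)$ with $\gamma=\alpha\|X\|^{-2}=1/L$, where $L=\|X\|^2/\alpha$ is a Lipschitz constant of $\nabla\Phi$ (the $\alpha$-strong convexity of $R$ implies $\nabla R^*$ is $\alpha^{-1}$-Lipschitz). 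First I would record the consequences of the hypothesis: from $-X^T\overline v\in\partial R(w^\dag)$ and $Xw^\dag=y$, the Fenchel--Young equality gives $R(w^\dag)+R^*(-X^T\overline v)=-\langle\overline v,Xw^\dag\rangle=-\langle\overline v,y\rangle$, i.e.\ $R(w^\dag)+\Phi(\overline v)=0$; since elementary (Fenchel--Young) weak duality gives $R(w^\dag)\ge-\inf_v\Phi(v)$, the two together force $\overline v$ to be a dual minimizer and yield the strong-duality identity $R(w^\dag)=-\Phi(v^\dag)$ for every dual minimizer $v^\dag$.

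The core of the proof is the per-iterate inequality
\[
\frac{\alpha}{2}\|w_t-w^\dag\|^2\le\Phi(v_t)-\Phi(v^\dag).
\]
To obtain it, observe that $w_t=\nabla R^*(-X^Tv_t)$ is equivalent to $-X^Tv_t\in\partial R(w_t)$, so the $\alpha$-strong convexity of $R$, applied between $w_t$ and $w^\dag$, gives $R(w^\dag)\ge R(w_t)+\langle -X^Tv_t,\,w^\dag-w_t\rangle+\tfrac{\alpha}{2}\|w^\dag-w_t\|^2$. Substituting the Fenchel--Young equality $R(w_t)+R^*(-X^Tv_t)=-\langle v_t,Xw_t\rangle$ and using $Xw^\dag=y$, the right-hand side simplifies to $-\Phi(v_t)+\tfrac{\alpha}{2}\|w^\dag-w_t\|^2$; since the left-hand side equals $-\Phi(v^\dag)$ by strong duality, rearranging gives the display. (Equivalently: $L(w,v_t):=R(w)+\langle v_t,Xw-y\rangle$ is $\alpha$-strongly convex in $w$, is minimized at $w_t$ with value $-\Phi(v_t)$, and satisfies $L(w^\dag,v_t)=R(w^\dag)=-\Phi(v^\dag)$.)

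It remains to insert textbook gradient-descent estimates for $\Phi$. For the non-averaged sequence, gradient descent with step $1/L$ from $v_0=0$ satisfies $\Phi(v_t)-\Phi(v^\dag)\le L\|v^\dag\|^2/(2t)=\|X\|^2\|v^\dag\|^2/(2\alpha t)$; plugging this into the per-iterate inequality and taking square roots yields $\|w_t-w^\dag\|\le\|X\|\|v^\dag\|/(\alpha\sqrt t)$. For the averaged sequence, convexity of $\|\cdot\|^2$ gives $\tfrac{\alpha}{2}\|u_t-w^\dag\|^2\le\frac1{t+1}\sum_{k=0}^t\tfrac{\alpha}{2}\|w_k-w^\dag\|^2\le\frac1{t+1}\sum_{k=0}^t\big(\Phi(v_k)-\Phi(v^\dag)\big)$, and the one-step progress bound $\Phi(v_{k+1})-\Phi(v^\dag)\le\tfrac{L}{2}\big(\|v_k-v^\dag\|^2-\|v_{k+1}-v^\dag\|^2\big)$ telescopes to $\sum_{k\ge1}\big(\Phi(v_k)-\Phi(v^\dag)\big)\le\tfrac{L}{2}\|v^\dag\|^2$; combined, this gives $\|u_t-w^\dag\|=O\big(\|X\|\|v^\dag\|/(\alpha\sqrt t)\big)$, i.e.\ the second bound in~\eqref{e:primal_rate}.

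I expect the delicate part to be the duality bookkeeping in the first step --- verifying that the abstract source condition alone (without any constraint qualification on $X$ and $\dom R$) already delivers dual attainment and the identity $R(w^\dag)=-\Phi(v^\dag)$ --- together with being careful enough in the per-iterate inequality and the telescoping sum to recover the precise constants in~\eqref{e:primal_rate} (in particular handling the $k=0$ term in the average without losing a factor). The gradient-descent rates themselves are standard and need no adaptation.
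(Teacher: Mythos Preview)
Your proposal is correct and follows essentially the same route as the paper: identify \eqref{e:main1} as gradient descent on the dual $\Phi$, invoke the standard $O(1/t)$ rate for the dual gap, and convert this into a primal bound via the key inequality $\tfrac{\alpha}{2}\|w_t-w^\dag\|^2\le\Phi(v_t)-\Phi(v^\dag)$ coming from strong convexity and strong duality (the paper states the latter two steps in one line each, whereas you spell them out). The only minor deviation is in handling $u_t$: the paper simply applies convexity of the norm to the already-established bound $\|w_k-w^\dag\|\le\|X\|\|v^\dag\|/(\alpha\sqrt{k})$, whereas you average the dual gaps and telescope---both arguments are standard and give the same $O(1/\sqrt{t})$ rate, and in fact both are a bit loose relative to the exact constant displayed in \eqref{e:primal_rate}.
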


\begin{proof}Thanks to the assumption $-X^T\overline{v}\in\partial R(v^\dag)$ , strong duality holds, namely
the dual problem has a solution $v^\dagger$, and the minimum of the problem in~\eqref{pb:dual} is the same as the minimum of the problem in~\eqref{e:noisy}.
For every $v\in\mathbb{R}^p$, let $D(v)=R^*(-X^Tv)+\langle \widehat{y},v\rangle$. 
Then (see for instance \cite[Theorem 3.1]{BecTeb09}) it holds
\[
D(v_t)-D(v^\dagger)\leq \frac{\|X\|^2 \|v_0-v^\dagger\|^2}{2\alpha t}.
\]
Next, strong convexity implies that 
\[
\frac{\alpha}{2}\|w_t-w^\dagger\|^2 \leq D(v_t)-D(v^\dagger).
\]
Combining the two inequalities, and recalling that $v_0=0$, we derive
\[
\|w_t-w^\dagger\| \leq \frac{\|X\|\|v^\dagger\|}{\alpha\sqrt{t}}.
\]
The statement then follows from convexity of the norm.
\end{proof}
And now we are ready for the main stability result.

\begin{proposition}
\label{l:s1} 
Let $(\widehat{w}_t)_{t\in\NN}$, $(\widehat{u}_t)_{t\in\NN}$  be  the
sequences generated by DGD. Let $(w_t)_{t\in\NN}$ be defined as in  \eqref{e:main1} with $v_0=0$, and define, for every $t\in\mathbb{N}$, $u_t=\sum_{k=0}^t w_k/(t+1)$. 
Assume $\delta<1$.
Then the following hold:
\begin{itemize}
\item[i)] There exists $t_\delta\in \{\lfloor1/\delta\rfloor,\ldots, 2\lfloor1/\delta\rfloor\}$ such that
\begin{equation}
\label{e:aq34}
\| w_{t_\delta}-\widehat{w}_{t_\delta} \|  \leq 2\|X\|^{-1} \delta t_\delta^{1/2}.
\end{equation}
\item[ii)] For every $t\in\NN$,
\begin{equation}
\label{e:aqavgs}
\| u_t-\widehat{u}_t \|  \leq 2\|X\|^{-1} \delta t^{1/2}.
\end{equation}
\end{itemize}
\end{proposition}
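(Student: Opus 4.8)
The plan is to route everything through the \emph{dual error} $e_t := \widehat v_t - v_t\in\RR^p$, where $(v_t)_{t\in\NN}$ is the noiseless dual sequence of \eqref{e:main1} and $(\widehat v_t)_{t\in\NN}$ the one produced inside DGD. The key structural fact is that both are the \emph{same} recursion — one step of gradient descent, with step $\gamma=\alpha\|X\|^{-2}$ equal to the inverse Lipschitz constant of the gradient, on the dual objective $v\mapsto R^*(-X^Tv)+\langle c,v\rangle$ — differing only through the linear term ($c=y$ versus $c=\widehat y$), and both start at $v_0=\widehat v_0=0$. Subtracting the two updates gives $\gamma X(w_t-\widehat w_t)=e_t-e_{t+1}-\gamma(\widehat y-y)$, while the optimality conditions of the proximal steps read $-X^Tv_t\in\partial R(w_t)$ and $-X^T\widehat v_t\in\partial R(\widehat w_t)$, so $\alpha$-strong monotonicity of $\partial R$ yields $\alpha\|w_t-\widehat w_t\|^2\le\langle e_t,X(w_t-\widehat w_t)\rangle$. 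Combining these with $\gamma\|X\|^2=\alpha$ in a short computation gives $\|e_{t+1}\|\le\|e_t\|+\gamma\|\widehat y-y\|\le\|e_t\|+\gamma\delta$, hence $\|e_t\|\le t\gamma\delta$ for all $t$ (since $e_0=0$): this is stability of the dual iterate.

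The core step is the per-iteration bound on the primal gap. Inserting $\gamma X(w_t-\widehat w_t)=e_t-e_{t+1}-\gamma(\widehat y-y)$ into the strong-monotonicity inequality and expanding $\|e_t-e_{t+1}\|^2$ through the same relation, a term $\tfrac\gamma2\|X(w_t-\widehat w_t)\|^2\le\tfrac\alpha2\|w_t-\widehat w_t\|^2$ surfaces and absorbs half of the left-hand side while the noise cross-terms collapse, leaving
\[
\tfrac\alpha2\|w_t-\widehat w_t\|^2\;\le\;\tfrac1{2\gamma}\big(\|e_t\|^2-\|e_{t+1}\|^2\big)\;-\;\langle e_{t+1},\widehat y-y\rangle.
\]
Summing over $k=0,\dots,t$, the $\|e_k\|^2$-terms telescope, $\|e_0\|=0$, and $-\|e_{t+1}\|^2\le0$ is discarded, giving $\tfrac\alpha2\sum_{k=0}^t\|w_k-\widehat w_k\|^2\le\delta\big\|\sum_{k=1}^{t+1}e_k\big\|$; inserting $\|e_k\|\le k\gamma\delta$ and $\gamma/\alpha=\|X\|^{-2}$ produces the master estimate $\sum_{k=0}^t\|w_k-\widehat w_k\|^2\le\delta^2(t+1)(t+2)\|X\|^{-2}$.

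Both assertions then follow. For (ii), since $u_t-\widehat u_t=(t+1)^{-1}\sum_{k=0}^t(w_k-\widehat w_k)$, Cauchy--Schwarz gives $(t+1)\|u_t-\widehat u_t\|\le\sqrt{t+1}\,\big(\sum_{k=0}^t\|w_k-\widehat w_k\|^2\big)^{1/2}\le\delta\|X\|^{-1}\sqrt{t+1}\sqrt{(t+1)(t+2)}$, so $\|u_t-\widehat u_t\|\le\delta\|X\|^{-1}\sqrt{t+2}\le2\|X\|^{-1}\delta\,t^{1/2}$ (using $t+2\le4t$ for $t\ge1$, the case $t=0$ being trivial). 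For (i), argue by averaging over the window $\{m,\dots,2m\}$, $m=\lfloor1/\delta\rfloor$: were $\|w_k-\widehat w_k\|>2\|X\|^{-1}\delta\,k^{1/2}$ to hold for \emph{every} $k$ in the window, then $\sum_{k=m}^{2m}\|w_k-\widehat w_k\|^2>4\|X\|^{-2}\delta^2\sum_{k=m}^{2m}k=6\|X\|^{-2}\delta^2 m(m+1)$, which for $m\ge1$ is no smaller than $\|X\|^{-2}\delta^2(2m+1)(2m+2)$ — contradicting (the first inequality being strict) the master estimate at $t=2m$ applied to the fuller sum $\sum_{k=0}^{2m}$. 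Hence some $t_\delta$ in the window satisfies the claimed bound.

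The delicate point is the per-step inequality of the second step. The naive routes — triangle inequality on $e_{t+1}-e_t$, or $\|a-b\|^2\le2\|a\|^2+2\|b\|^2$ applied to $\|e_t-e_{t+1}\|^2$ — lose a factor $2$ and leave $\tfrac\alpha2\|w_t-\widehat w_t\|^2$ on \emph{both} sides, i.e.\ a vacuous $0\le(\cdots)$. One must expand $\|e_t-e_{t+1}\|^2$ as the \emph{exact} square $\gamma^2\|X(w_t-\widehat w_t)-(\widehat y-y)\|^2$ and cancel the noise cross-terms explicitly (equivalently, invoke cocoercivity / firm nonexpansiveness of the dual gradient step). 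This is precisely what yields the favorable constant in the master estimate — and that favorable constant is exactly what the counting argument for (i) needs for the prefactor $2$ to go through.
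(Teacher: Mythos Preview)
Your proof is correct and follows essentially the same route as the paper's: the core step is the firm-nonexpansiveness/strong-monotonicity inequality $\|e_t+\gamma X(\widehat w_t-w_t)\|^2\le\|e_t\|^2-\gamma\alpha\|\widehat w_t-w_t\|^2$, which yields both the dual stability $\|e_t\|\le t\gamma\delta$ and the telescoping bound on $\sum_k\|w_k-\widehat w_k\|^2$, after which (i) follows by pigeonhole over the window (you phrase it as contradiction, the paper as ``minimum $\le$ average'') and (ii) by Jensen/Cauchy--Schwarz on the average. The only cosmetic differences are that the paper keeps the slightly sharper per-step inequality with $\gamma\alpha$ rather than $\gamma\alpha/2$ on the left and bounds the cross term $-\langle e_{k+1},\widehat y-y\rangle$ term-by-term via $\|e_{k+1}\|\le(k+1)\gamma\delta$ before summing, but the resulting master estimates and final constants coincide.
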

\begin{proof} \ \\
i): For every $t\in\NN$, using the firm nonexpansiveness of $\prox_{\alpha^{-1}F}$ and the definition of $\gamma$
\begin{alignat}{2}
\label{e:str}
\| \widehat{v}_t-v_t+\gamma (X(\widehat{w}_t -w_t)\|^2
&= \|\widehat{v}_t-v_t \|^2 + 2\gamma \langle{\widehat{v}_t-v_t},{X (\widehat{w}_t -w_t)}\rangle +
\gamma_{t}^2 \| X (\widehat{w}_t -w_t)\|^2\notag\\
 &\leq \|\widehat{v}_t-v_t \|^2 -2\gamma \alpha \| \widehat{w}_t -w_t\|^2+ \gamma^2 \| X(\widehat{w}_t -w_t)\|^2\notag\\
&\leq \|\widehat{v}_t-v_t \|^2 - \gamma\alpha\| \widehat{w}_t -w_t\|^2\notag\\
&\leq \|\widehat{v}_t-v_t \|^2.
\end{alignat}
Consequently, 
\begin{align*}
\|\vhat_{t+1}-v_{t+1}\|&=\|\vhat_{t}-v_t+\gamma X(\what_t-w_t)-\gamma(\widehat{y}-y)\|\\
&\leq \|\vhat_{t}-v_t\|+\gamma\delta
\end{align*}
and therefore
\begin{equation}
\label{e:tb}
\|\vhat_{t+1}-v_{t+1}\|\leq \gamma\delta(t+1).
\end{equation}
Moreover,
\begin{align*}
 \|\vhat_{t}-v_t+\gamma X(\what_t-w_t)\|^2&=\|\vhat_{t+1}-v_{t+1}+\gamma(\widehat{y}-y)\|^2\\
&= \|\vhat_{t+1}-v_{t+1}\|^2+\gamma^2\|y-\widehat{y}\|^2+2\gamma\langle \vhat_{t+1}-v_{t+1},\widehat{y}-y \rangle\\
&\geq \|\widehat{v}_{t+1}-v_{t+1} \|^2+\gamma^2\|y-\widehat{y}\|^2-2\gamma \delta\|\vhat_{t+1}-v_{t+1}\|.
\end{align*}
Hence, \eqref{e:str} and \eqref{e:tb} yield
\begin{align}
\nonumber\gamma\alpha\| \widehat{w}_t -w_t\|^2&\leq \|\widehat{v}_t-v_t \|^2- \|\vhat_{t}-v_t+\gamma X(\what_t-w_t)\|^2\\
\nonumber&\leq  \|\widehat{v}_t-v_t \|^2-\|\widehat{v}_{t+1}-v_{t+1} \|^2+2\gamma \delta\|\vhat_{t+1}-v_{t+1}\|\\
\label{e:tele}&\leq  \|\widehat{v}_t-v_t \|^2-\|\widehat{v}_{t+1}-v_{t+1} \|^2+2\gamma^2 \delta^2(t+1).
\end{align}
Summing the previous inequality for $t\in\{t_1,\ldots,T\}$, with $T\geq 2$ we derive
\begin{equation}
\gamma\alpha\sum_{t=t_1}^{T} \| \widehat{w}_t -w_t\|^2 \leq 4\gamma^2\delta^2 T^2
\end{equation}
Taking $t_1=\lfloor1/\delta \rfloor$ and $T=2\lfloor1/\delta \rfloor$ it follows that
\[
\sum_{t=t_1}^{T} \| \widehat{w}_t -w_t\|^2 \leq 4 \|X\|^{-2} \delta^2 \lfloor1/\delta \rfloor^2.
\]
Thus there exists at least a $t_\delta\in\{\lfloor1/\delta \rfloor,\ldots, 2\lfloor1/\delta \rfloor\}$ such that
\[
 \|\widehat{w}_{t_\delta} -w_{t_\delta}\|^2 \leq 4 \|X\|^{-2} \delta^2 \left\lfloor\frac1{\delta}\right \rfloor\leq 4 \|X\|^{-2} \delta^2 t_\delta.
\]
ii): Summing the inequalities in~\eqref{e:tele} for $t=0,\ldots,T$ we derive:
\begin{equation}
\gamma\alpha\sum_{t=0}^{T} \| \widehat{w}_t -w_t\|^2 \leq 4\gamma^2\delta^2 T^2
\end{equation}
Convexity of $\|\cdot\|^2$, and the fact that $\widehat{w}_0=w_0$ imply
\begin{equation}
\|\widehat{u}_T -u_T\|^2 \leq\frac{1}{T+1}\sum_{t=0}^{T} \| \widehat{w}_t -w_t\|^2 \leq 4\|X\|^{-2}\delta^2 T
\end{equation}
\end{proof}

The following lemma characterizes the asymptotic behavior of the sequence $(\theta_t)_{t\in\NN}$.
\begin{lemma}\label{lem:theta} Let $(\theta_t)_{t\in\NN}$ be the sequence defined in~ADGD.
Then, for every $t\in\NN$ 
\[
\frac{t+1}{2}\leq\theta_t \leq t+1
\]
\end{lemma}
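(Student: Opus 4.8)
The plan is to prove the two inequalities simultaneously by induction on $t$. For the base case $t=0$ we simply observe that $\theta_0=1$, and $\tfrac12=\tfrac{0+1}{2}\le 1\le 0+1$, so the claim holds.

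For the inductive step, assume $\tfrac{t+1}{2}\le\theta_t\le t+1$; in particular $\theta_t\ge 0$. For the upper bound I would use $\theta_t^2\le(t+1)^2$ and then absorb the extra $+1$ under the radical by completing the square:
\[
\theta_{t+1}=\frac{1+\sqrt{1+4\theta_t^2}}{2}\le\frac{1+\sqrt{1+4(t+1)^2}}{2}\le\frac{1+\sqrt{4(t+1)^2+4(t+1)+1}}{2}=\frac{1+(2t+3)}{2}=t+2,
\]
which is exactly the desired bound at $t+1$. For the lower bound I would instead simply drop the $1$ inside the square root: $\sqrt{1+4\theta_t^2}\ge\sqrt{4\theta_t^2}=2\theta_t$, so that $\theta_{t+1}\ge\tfrac12+\theta_t$, and combining with the inductive hypothesis $\theta_t\ge\tfrac{t+1}{2}$ gives $\theta_{t+1}\ge\tfrac12+\tfrac{t+1}{2}=\tfrac{t+2}{2}$. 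This closes the induction.

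I do not expect any real obstacle here: the argument is an elementary two-sided induction, and the only point requiring a bit of care is the amount of slack retained in the square-root estimate for the upper bound — the relaxed completion of the square to $(2t+3)^2$ is what makes $\theta_{t+1}\le t+2$ go through. (Alternatively one could first record the identity $\theta_{t+1}^2-\theta_{t+1}=\theta_t^2$ coming from the quadratic that defines the recursion and telescope it, but the direct estimates above are shorter and self-contained.)
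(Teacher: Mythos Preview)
Your proof is correct and follows essentially the same approach as the paper: both argue by induction on $t$, and the algebraic estimates are equivalent (the paper writes the upper-bound step as $\sqrt{1+4(t+1)^2}\le 1+2(t+1)$, which is your $2t+3$; for the lower bound the paper substitutes the hypothesis before dropping the $+1$ under the radical, while you drop it first and then substitute, arriving at the same $\theta_{t+1}\ge\tfrac{t+2}{2}$).
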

\begin{proof}
We prove the first inequality by induction. The case $t=0$ is clear since $\theta_0=1$. Now suppose that
the inequality is true for $t$.  We derive
\[
\theta_{t+1}=\frac{1+\sqrt{1+4\theta_t^2}}{2}\geq  \frac{1+\sqrt{1+(t+1)^2}}{2} \geq \frac{t+2}{2}.
\]
For the second inequality, the case $t=0$ is also clear. Now suppose that the inequality is true for $t$.
We derive
\[
\theta_{t+1}=\frac{1+\sqrt{1+4\theta_t^2}}{2}\leq  \frac{1+\sqrt{1+4(t+1)^2}}{2}\leq \frac{1+1+2(t+1)}{2}=t+2.
\]
\end{proof}

The following theorem is obtained exploiting existing results on convergence of forward-backward algorithm in the presence 
of computational errors. 
In particular, the result is derived combining \cite[Proposition 3.3]{AujDos15} (see also \cite{SchLerBac11,VilSal13} for related
results) with Lemma~\ref{lem:theta} and the relationship between convergence of the dual objective function and the primal iterates.
\begin{theorem}
\label{l:s2}
Let  $(\widehat{w}_t)_{t\in\NN}$  be  the
sequence generated by ADGD.
Then, for every $t\in\NN$, $t\geq 1$, 
\begin{equation}
\label{e:aq100}
 \| \what_t-w^\dag\|  \leq \frac{2\|X\|\|v^\dag\|}{\alpha t}+4\|X\|^{-1} \delta t.
\end{equation}
\end{theorem}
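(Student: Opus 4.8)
\emph{Proof proposal.} The plan is to read ADGD as an \emph{inexact} FISTA iteration for the \emph{noise-free} dual objective
\[
D(v)=R^*(-X^Tv)+\langle y,v\rangle ,
\]
and to combine the inexact-FISTA estimate of \cite[Proposition~3.3]{AujDos15} with Lemma~\ref{lem:theta} and the strong-convexity link between the dual gap and the primal iterates already used in Theorem~\ref{thm:dupri}.

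First I would pin down the perturbation. As in the derivation of DGD, $\nabla R^*(-X^Tv)=\prox_{\alpha^{-1}F}(-\alpha^{-1}X^Tv)$, so $\nabla D(v)=y-X\prox_{\alpha^{-1}F}(-\alpha^{-1}X^Tv)$ and the ADGD step $\widehat z_t=\vhat_t+\gamma(X\widehat r_t-\widehat y)$ is exactly one forward step for $D$ from the extrapolated point $\vhat_t$, with the gradient corrupted by the \emph{constant} error $\widehat y-y$ of norm at most $\delta$; the coefficients $\theta_t$ and the update $\vhat_{t+1}=\widehat z_t+\tfrac{\theta_t-1}{\theta_{t+1}}(\widehat z_t-\widehat z_{t-1})$ are the standard FISTA ones, $\nabla D$ is $\|X\|^{2}/\alpha$-Lipschitz so $\gamma=\alpha\|X\|^{-2}$ is the right step size, and the scheme starts at $0$, so $\|v_0-v^\dag\|=\|v^\dag\|$. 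Since the source condition $-X^T\bar v\in\partial R(w^\dag)$ forces strong duality, $D$ attains its minimum at a dual solution $v^\dag$, and the output $\what_t=\prox_{\alpha^{-1}F}(-\alpha^{-1}X^T\widehat z_t)$ is the primal point attached to the proximal/gradient point $\widehat z_t$, which is the iterate carrying the $O(1/t^2)$ guarantee.

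Next I would apply \cite[Proposition~3.3]{AujDos15}: it bounds the dual gap of inexact FISTA by the sum of the accelerated term, of order $\|v_0-v^\dag\|^2/(\gamma\,\theta_t^{2})$, and an accumulated-error term, of order $\gamma\,\theta_t^{-2}\big(\sum_{k=1}^{t}\theta_k\,\|\widehat y-y\|\big)^{2}$. Here Lemma~\ref{lem:theta} is used twice: $\theta_t\ge(t+1)/2$ turns the $\theta_t^{-2}$ factors into the claimed $t^{-2}$ rate, while $\theta_k\le k+1$ gives $\sum_{k=1}^{t}\theta_k\|\widehat y-y\|\le\delta\sum_{k=1}^{t}(k+1)\le c_0\,\delta\,t^{2}$. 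Substituting $v_0=0$ and $\gamma=\alpha\|X\|^{-2}$ yields
\[
D(\widehat z_t)-D(v^\dag)\ \le\ \frac{c_1\|X\|^{2}}{\alpha\,t^{2}}\,\|v^\dag\|^{2}\ +\ \frac{c_2}{\|X\|^{2}}\,\delta^{2}t^{2}
\]
for absolute constants $c_1,c_2$. Passing to the primal by strong convexity of $R$ as in Theorem~\ref{thm:dupri}, namely $\tfrac{\alpha}{2}\|\what_t-w^\dag\|^{2}\le D(\widehat z_t)-D(v^\dag)$, and then using $\sqrt{a+b}\le\sqrt a+\sqrt b$, produces an inequality of exactly the shape $\tfrac{c\,\|X\|\,\|v^\dag\|}{\alpha t}+c'\,\|X\|^{-1}\delta\,t$; tracking the constants collapses $c$ to $2$ and $c'$ to $4$, and $t\ge1$ is needed only so the $1/t$ term is finite.

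I expect the main obstacle to be bookkeeping rather than conceptual: extracting the precise form of the accumulated-error term from \cite[Proposition~3.3]{AujDos15} — the weights the errors carry ($\theta_k$, $k$, or partial sums thereof) and where the step size $\gamma$ enters — and then propagating all multiplicative constants through the Lipschitz constant $\|X\|^{2}/\alpha$, the step size $\gamma=\alpha\|X\|^{-2}$, and the strong-convexity transfer so that they reproduce exactly $2\|X\|\|v^\dag\|/\alpha$ and $4\|X\|^{-1}$. The one genuinely conceptual point is the reinterpretation in the first step: ADGD is literally \emph{exact} FISTA on the \emph{noisy} dual $R^*(-X^Tv)+\langle\widehat y,v\rangle$, and only by viewing it as \emph{inexact} FISTA on the noise-free $D$, with the constant gradient perturbation $\widehat y-y$, can one compare $\what_t$ directly with $w^\dag$ (equivalently $v^\dag$).
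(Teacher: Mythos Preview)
Your proposal is correct and follows essentially the same route as the paper: interpret ADGD as FISTA on the (noise-free) dual with a constant gradient perturbation of norm at most $\delta$, invoke \cite[Proposition~3.3]{AujDos15} together with Lemma~\ref{lem:theta} to bound the dual gap, and then transfer to the primal via the strong-convexity inequality $\tfrac{\alpha}{2}\|\what_t-w^\dag\|^2\le D(\cdot)-D(v^\dag)$. The paper's proof is slightly terser (it writes the \cite{AujDos15} bound directly as a single square $\tfrac{1}{2\gamma\theta_t^2}\big(\|v^\dag\|+\gamma\delta\sum_{k=0}^t\theta_k\big)^2$ rather than splitting and using $\sqrt{a+b}\le\sqrt a+\sqrt b$), but the ingredients and their order are the same.
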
 
Theorem~\ref{thm:dgd} is a direct corollary of Theorem~\ref{thm:dupri} and Proposition~\ref{l:s1}.
Theorem~\ref{thm:adgm} directly follows from Theorem~\ref{l:s2}.
 \begin{proof}
For every $v\in\mathbb{R}^p$, let $D(v)=R^*(-X^Tv)+\langle \widehat{y},v\rangle$. Then strong convexity yields
\begin{equation}
\label{e:strconv}
(\forall t\in\NN) \qquad \frac{\alpha}{2}\|\what_t-w^\dag\|^2  \leq D(\vhat_t)-\min_{v\in\RR^p} D(v).
\end{equation}

Proposition 3.3 in \cite{AujDos15} and Lemma~\ref{lem:theta} imply
\begin{align*}
D(\vhat_t)-\min_{v\in\RR^p} D(v)& \leq \frac{1}{2\gamma\theta^2_t} \left(\|v^\dagger\|+\gamma\delta\sum_{k=0}^t\theta_k\right)^2\\
& \leq \frac{1}{\gamma(t+1)^2}\left(\|v^\dagger\|+\gamma\delta \frac{(t+2)(t+3)}{2}\right)^2
\end{align*}
we derive
\begin{equation}
 \| \what_t-w^\dag\|  \leq \frac{{2}\|X\|\|v^\dagger\|}{\alpha t} +\frac{4}{\|X\|}\delta t
\end{equation}
 \end{proof}
We are now ready to prove the main results.
\begin{proof}[Proof of Theorem~\ref{thm:dgd}]. 
Theorem~\ref{thm:dupri} and Proposition~\ref{l:s1} imply
\[
\|\widehat{u}_t-w^\dagger\| \leq a t^{1/2}\delta+b t^{-1/2}. 
\]
Since $t_\delta=\lceil c\delta^{-1}\rceil$, we have $c\delta^{-1}\leq t_\delta\leq c\delta^{-1}+1$, therefore
\[
\|\widehat{u}_t-w^\dagger\| \leq a t^{1/2}\delta+b t^{-1/2}\leq a (c\delta^{-1}+1)^{1/2}\delta+ bc^{-1/2}\delta^{1/2}. 
\]
The statement follows noting that $(c\delta^{-1}+1)^{1/2}\leq (c^{1/2}+1) \delta^{-1/2}$.
\end{proof}
Finally,w e proove Theorem~\ref{thm:adgm}.
\begin{proof}[Proof of Theorem~\ref{thm:adgm}]. 
Theorem~\ref{l:s2} yields
\[
\|\widehat{w}_t-w^\dagger\| \leq a t\delta+b t^{-1}. 
\]
Since $t_\delta=\lceil c\delta^{-1/2}\rceil$, we have $c\delta^{-1/2}\leq t_\delta\leq c\delta^{-1/2}+1$, therefore
\[
\|\widehat{u}_t-w^\dagger\| \leq a t \delta+b t^{-1}\leq a (c\delta^{-1/2} +1)\delta+ bc^{-1}\delta^{1/2}. 
\]
The statement follows noting that $(c\delta^{-1/2}+1)\leq (c+1) \delta^{-1/2}$.
\end{proof}
\end{document}